\newtheorem{thm}{Theorem}[section]
\newtheorem{lemm}[thm]{Lemma}
\newtheorem{cor}[thm]{Corollary}
\newtheorem{definition}[thm]{Definition}
\begin{document}
\begin{center}
\uppercase{\bf Cops and an Insightful Robber}
\vskip 20pt
{\bf Melissa A. Huggan}\\
{\small Department of Mathematics, Ryerson University\\Toronto, Ontario, Canada}\\
{\tt melissa.huggan@ryerson.ca}\\ 
\vskip 10pt
{\bf Richard J. Nowakowski}\\
{\small Department of Mathematics and Statistics, Dalhousie University\\Halifax, Nova Scotia, Canada}\\
{\tt r.nowakowski@dal.ca}\\ 
\end{center}

\begin{abstract}
The `Cheating Robot' version of Cops and Robbers is played on a finite, simple, connected graph.  The players move in the same time period. However, before moving, the robot observes to which vertices the cops are moving and it is fast enough to complete its move in the time period. The cops also know that the robot will use this information. More cops are required to capture a robot than to capture a robber. Indeed, the minimum degree is a lower bound on the number of cops required to capture a robot. Only on a tree is one cop guaranteed to capture a robot, although two cops are sufficient to capture both a robber and a robot on outerplanar graphs. In graphs where retracts are involved, we show how cop strategies against a robber can be modified to capture a robot. This approach gives exact numbers for hypercubes, and $k$-dimensional grids in general.

\vspace{0.5cm}
\noindent
 \underline{Keywords:} Cops and Robbers, Pursuit-Evasion Games, Retracts, Graph Products.

\end{abstract}

\section{Introduction}  
In Pursuit-Evasion games such as Cops and Robbers, the players move alternately. This is not true in `real life'. Also, the Bad Guys often have access to  information (via bribery or bugging offices and equipment) about the Good Guys strategies. How this changes the Good Guys strategies is the subject of this paper. 

For the \emph{Cheating Robot} (CR) variant of Cops and Robbers, the board is a finite, simple, connected graph.   One player controls $k$ tokens, called \textit{cops}, and the other player controls one token, called the \textit{robot}. In accordance with usual practice, the cops are female and the robot is male.  Initially, the cops place their tokens on vertices, then the robot places their token. A vertex can have more than one token. Thereafter, they move each of their tokens to another vertex in its closed neighbourhood. The players move in the same time period, however, the robot moves immediately after divining the cops' moves. 

Play can be regarded as alternating, and we will adopt this point of view. However, the capturing rules must be changed to reflect the timing of the moves.
A cop \textit{captures} the robot either (i) if, at the end of the robot's move, a cop and the robot are on the same vertex, or (ii) the robot traverses
an edge traversed by a cop on their last move. We assume that the cops know that the robot has the information about their moves when they are determining their strategy.

This model was first introduced within the general context of combinatorial games in~\cite{Huggan,HugganRJN}.  For context, the name, 
Cheating Robot, comes from the Japanese robot that wins Rock-Paper-Scissors 100\% of the time against humans. 
It cheats by having processors fast enough to both identify the human's move and respond appropriately \cite{ItoSYI2016,ShortHVS2010}. Even though the players are supposed to move in the same time period, the robot is moving second every time.

 The original game of Cops and Robbers was first considered in \cite{NowaW,Quilliot} and extended in \cite{AignerF}. The players moved alternately, there is perfect information and the robber is caught if, at any point in the game, a cop and the robber are on the same vertex. The game has been extensively studied with many outstanding questions. See  \cite{BonatoN} for the foundations of this area of research. Cops and Robbers belong to a wider class called pursuit-evasion games.  
 
 For a  graph theory reference, see \cite{West}.  The following definitions are standard. 

\begin{definition}
 For a graph $G$, let $c(G)$ and $c_{cr}(G)$ be the least number of cops required to capture the robber and robot, respectively, on $G$.
\end{definition}

 When referring to results, to help make the distinction clear, results about a `robber' will always be referring to the Cops and Robbers game and results about a `robot' will refer to the Cheating Robot variant. Knowing the behaviour of parameters on subgraphs is often helpful. We show that subgraphs with minimum degree constraints give a lower bound for the number of cops needed to capture a robot, Theorem \ref{thm: k-core}. This also leads to showing that $c_{cr}(G)=1$ if and only if $G$ is a tree, Theorem \ref{thm: trees}.
  It is not known if $c_{cr}(H) \leq c_{cr}(G)$  for $H$ an induced subgraph of $G$ but Theorem \ref{thm:retract} shows that $c_{cr}(H) \leq c_{cr}(G)$ if $H$ is a retract. Retracts are useful. Apart from providing a lower bound, they play an important part when considering products of graphs. Theorem \ref{thm:cartesianproduct} proves that the Cartesian product of two graphs requires no more cops than the total needed for two graphs individually. This immediately gives the exact number for hypercubes and $k$-dimensional grids, Corollary \ref{cor:cartesiangrid}. In Cops and Robbers, 
 the strong product of graphs was relatively easy to analyse. The same is not true in the Cheating  Robot model. Even for paths,
 although the two-dimensional case is relatively straightforward, Theorem \ref{thm: strong grid base case}, in general,  only bounds are known,
 Theorem \ref{thm: k-dim strong grids}.  
  For outerplanar graphs, again, retracts help to show that only 2 cops are required to capture the robot, Theorem \ref{thm:outerplanar}. We close with some open questions.
 
Closely related game variants to the Cheating Robot are: (1) simultaneously moving cops and robbers~\cite{Konstantinidis2016} and (2) surrounding cops and robbers (\cite{BradshawH}, \cite{BurgessCCDFJP}). The former investigates a variant where players are all moving simultaneously. Naturally, this is a probabilistic approach. The latter examines a similar ruleset to ours with the caveat that the robber and cop are allowed to cross an edge and the game can end after a cop move. In comparing the model from \cite{BurgessCCDFJP} and ours, a natural question arises: How closely related are the models? More specifically, do the cop numbers differ by at most $1$ for all classes of graphs? This currently remains an open question. 

\section{Degree Constraints}

For any graph $G$, comparing strategies available to the robot and those available to the robber, we obtain a lower bound on the 
required number of cops. 

\begin{thm} For any graph $G$, $c_{cr}(G)\geq c(G)$.
\end{thm}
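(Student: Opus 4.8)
The plan is to prove the equivalent assertion that for every integer $k$, if $k$ cops suffice to catch the robot then $k$ cops suffice to catch the robber; instantiating $k=c_{cr}(G)$ then yields $c(G)\le c_{cr}(G)$. In contrapositive form, I want to convert a winning escape strategy for the robber against $k$ cops into a winning escape strategy for the robot against $k$ cops. (Going the other way, adapting a winning \emph{cop} strategy directly, is less convenient, since a strategy built to trap the robot is tuned to the robot's more capable behaviour.) Intuitively this should go through because the robot is no easier to catch than the robber: the two games share the same timing --- in each round the cops move and then the robber/robot moves with knowledge of the cops' new positions --- and the robot's capture condition is weaker, since a cop merely stepping onto the robot's current vertex no longer ends the game. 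The only genuinely new feature is the edge-traversal rule~(ii), and dealing with it is the crux.

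The key fact I would record first is a distance property of classical escape strategies: if $\tau$ is a strategy with which the robber evades $k$ cops, then right after the robber's placement, and right after each of the robber's moves, every cop is at distance at least $2$ from the robber. Indeed, were some cop adjacent to the robber at such a moment, the cops could capture on their next turn simply by having that cop step onto the robber's vertex; since $\tau$ must defeat every cop strategy, this never happens.

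Now I would have the robot copy $\tau$ move for move: place where $\tau$ places, and whenever the cops move to positions $C'$, respond with the vertex $r'$ that $\tau$ prescribes against $C'$. Since the available information and the timing coincide in the two games, this is a legitimate robot strategy, and it remains to check that the robot is never captured. For rule~(i): by the distance property $r'$ lies at distance at least $2$ from every cop, so in particular $r'$ is not a cop vertex. For rule~(ii): if $r'=r$ the robot crosses no edge; otherwise suppose some cop crossed the edge $\{r,r'\}$ on this move. A cop crossing $r\to r'$ would occupy $r'$ afterwards, which rule~(i) already excludes, so the cop must have crossed $r'\to r$ and therefore sat at the neighbour $r'$ of $r$ at the start of the round. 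But the start of the round is exactly the instant just after the robot's previous move (or just after placement), at which the distance property puts every cop at distance at least $2$ from $r$ --- a contradiction. Hence the robot survives forever.

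The single real obstacle is this distance-$2$ lemma, together with the small bit of bookkeeping that identifies the robot's previous vertex with the vertex where $\tau$'s guarantee applies at the start of the current round; granting these, both capture rules are excluded automatically. Finally, since $c_{cr}(G)$ cops catch the robot --- so the robot has no escape against that many cops --- the contrapositive of the construction shows the robber has no escape against $c_{cr}(G)$ cops either, that is, $c(G)\le c_{cr}(G)$.
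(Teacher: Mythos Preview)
Your proof is correct and follows the same idea as the paper's one-line argument---have the robot adopt the robber's evasion strategy against $c(G)-1$ cops. You go further than the paper by explicitly verifying, via the distance-$2$ observation, that the edge-traversal capture rule~(ii) cannot be triggered when the robot mimics a winning robber strategy; the paper leaves this point implicit.
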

\begin{proof} If there are $c(G)-1$ cops, the robot can use the robber's strategy in $G$.
\end{proof}

\begin{cor}\label{cor: robot win}
Let $G$ be a graph and the cop player is controlling $k$ cops. If the robber can win on $G$ against $k$ cops then so can the robot. 
\end{cor}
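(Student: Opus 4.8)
First I would note that the statement is an immediate consequence of the theorem just proved: if the robber has a winning strategy against $k$ cops on $G$, then $k$ cops do not suffice to capture the robber, so $c(G)\ge k+1$; by the theorem, $c_{cr}(G)\ge c(G)\ge k+1$, so $k$ cops do not suffice to capture the robot either, i.e.\ the robot wins. In that sense there is nothing to do.

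I would nonetheless prefer to record the self-contained version, since it makes explicit the mechanism underlying the theorem. The plan is to convert a given winning robber strategy $\sigma$ (against $k$ cops) into a winning robot strategy by having the robot imitate the robber and discard its extra foresight. The first thing to check is that $\sigma$ only uses information the robot also has: in the alternating formulation of Cops and Robbers, the robber chooses its move knowing the cops' new positions, so $\sigma$ is a function of the history of cop positions, all of which the robot observes in the Cheating Robot game. Accordingly, let the robot place its token wherever $\sigma$ would place the robber, and thereafter move to whatever vertex $\sigma$ prescribes for the current configuration.

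The content of the argument is an induction showing that this mimicking play never triggers a capture. One couples the Cheating Robot game against a given cop strategy with the play of the ordinary game in which the cops make the identical moves; maintaining the invariant that the robot occupies the same (necessarily cop-free) vertex $r$ as the robber, suppose the cops move to positions $C'$. Safety of $\sigma$ gives that the robber's reply $r'$ lies in the closed neighbourhood of $r$ and avoids $C'$, so the robot's mirrored move is legal and capture condition~(i) does not occur. For condition~(ii), a cop would have had to traverse the edge $rr'$ this round, i.e.\ move from $r$ to $r'$ or from $r'$ to $r$; the first is impossible because no cop occupied $r$ at the start of the round (the robber was never captured), and the second would leave a cop on $r$, i.e.\ $r\in C'$, contradicting safety of $\sigma$ again. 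Hence condition~(ii) also fails, the invariant is restored, and the robot evades forever.

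The one place that needs care is this last step: one must fix the timing convention (cops move, then robot moves, then capture is evaluated) and the precise reading of ``traverses an edge traversed by a cop on their last move'', so that the two sub-cases for condition~(ii) are genuinely exhaustive. That bookkeeping is essentially the whole difficulty; everything else is routine.
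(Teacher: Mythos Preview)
Your proposal is correct and follows the same idea as the paper's one-line proof, ``The robot adopts the robber's strategy''; you simply spell out the verification of capture conditions (i) and (ii) that the paper leaves implicit, and additionally observe that the corollary also follows formally from the preceding theorem. Nothing further is needed.
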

\begin{proof}
The robot adopts the robber's strategy. 
\end{proof}

The converse of Corollary~\ref{cor: robot win} is false. For example, one cop on $C_3$ will capture the robber but not the robot because the robot will always have an escape move. The minimum degree of a graph provides a lower bound for the number of cops required to capture. This idea is extended in the next result. 

\begin{definition} Let $G$ be a graph, $k$ a positive integer, and $\delta_{G}$ be the minimum degree of $G$. An induced subgraph $H$ of $G$ is a $k$-core if 
$\delta_H\geq k$.
\end{definition}

It is known that to find a $k$-core it is sufficient to iteratively choose a vertex $x$ with degree less than $k$, delete $x$,
and continue in the reduced graph.

\begin{thm}\label{thm: k-core} Let $G$ be a graph with a $k$-core for some $k$. Then $c_{cr}(G)\geq k$.
\end{thm}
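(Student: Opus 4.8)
The plan is to prove the contrapositive in the strong form: $k-1$ cops cannot capture the robot, because the robot can confine itself forever to a fixed $k$-core. Fix a $k$-core $H\subseteq G$, so every vertex of $H$ has at least $k$ neighbours in $H$; in particular $|V(H)|\ge k+1>k-1$. Since the cops place their tokens first and there are only $k-1$ of them, some vertex $v_0\in V(H)$ is cop-free, and the robot starts there. The invariant to carry through the game is: at the start of each round the robot occupies a vertex of $H$ on which no cop sits — which is exactly what "the robot has not yet been captured" plus "the robot is in $H$" gives us.

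For the inductive step, assume the robot is at $v\in V(H)$ with no cop on $v$, and the cops announce their moves. Call an $H$-neighbour $w$ of $v$ \emph{blocked} if the robot would be captured by moving to $w$, i.e.\ if some cop's move ends at $w$ (capture rule (i)) or some cop traverses the edge $vw$ (capture rule (ii)). The key sub-claim is that a single cop move blocks at most one $H$-neighbour of $v$. Indeed, a cop move ends at a unique vertex and traverses at most one edge; if it ends at a vertex $w$, that can only block the neighbour $w$; and the only way a cop can traverse an edge incident to $v$ is by moving from some neighbour $w$ of $v$ \emph{into} $v$ (it cannot start at $v$, as $v$ is cop-free), which traverses only $vw$ and hence blocks only $w$. (Such a move also forbids the robot from staying at $v$, but the robot will not use that option, so this costs nothing.) Finally, a cop cannot both end at a neighbour of $v$ and traverse an edge incident to $v$: the traversed edge would then be incident to $v$ and have a neighbour of $v$ as an endpoint, forcing it to start at $v$, which is impossible. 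Thus each of the $k-1$ cops blocks at most one of the at least $k$ neighbours of $v$ in $H$, so some $H$-neighbour $w$ of $v$ is unblocked; the robot moves to $w\in V(H)$ and, by the definition of blocked, is not captured, restoring the invariant.

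Iterating this indefinitely, the robot is never captured, so $k-1$ cops do not suffice, giving $c_{cr}(G)\ge k$. The step needing the most care is the analysis of the edge-traversal rule (ii): essentially all the work is in verifying that this rule cannot let one cop destroy two of $v$'s escape neighbours, and in noting that the single situation where it removes the "stay put" option is harmless since the robot always exits to a neighbour. The only other bookkeeping — legality of the initial placement and of each robot move, and maintenance of the invariant — is immediate, the placement working precisely because $|V(H)|\ge k+1>k-1$.
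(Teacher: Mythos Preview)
Your proof is correct and follows essentially the same approach as the paper's: the robot confines himself to the $k$-core $H$ and, since each of the $k-1$ cops can block at most one $H$-neighbour of the robot's current vertex, an escape always exists. Your treatment is somewhat more careful than the paper's---you explicitly verify the ``one cop blocks at most one neighbour'' claim against both capture rules (i) and (ii), and you handle the initial placement via $|V(H)|\ge k+1$---whereas the paper additionally lets the robot stay put when no cop moves onto him, but this is a cosmetic difference, not a different argument.
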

\begin{proof} Let $H$ be a $k$-core of $G$. The robot restricts himself to only moving in $H$.
Suppose there are $k-1$ cops and suppose at some move the robot is on vertex $x\in V(H)$. By assumption,
 the degree of $x$ in $H$ is at least $k$. If none of the cops  move to $x$ the robot does not move. If $i$ of the cops move 
 from, say $y_1,y_2,\ldots,y_i$ to $x$ then the robot will be captured if he moves to any  $y_j$. However, there are only $k-i-1$ cops to cover the other, at least $k-i$, neighbours of $x$. Consequently, one neighbour will be unoccupied and the robot moves to it.   \end{proof}

\begin{cor}\label{cor: cycles}
For any cycle $C_n$, $c_{cr}(C_n)=2$. 
\end{cor}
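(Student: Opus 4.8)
The plan is to prove the two bounds $c_{cr}(C_n)\ge 2$ and $c_{cr}(C_n)\le 2$ separately. The lower bound is immediate from Theorem~\ref{thm: k-core}: every vertex of $C_n$ has degree $2$, so $C_n$ is its own $2$-core and hence $c_{cr}(C_n)\ge 2$. (Equivalently, one can observe directly that a lone cop can never occupy the robot's vertex, since the robot always has a free neighbour to flee to, cf.\ Corollary~\ref{cor: robot win}.)

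For the upper bound I would describe an explicit two-cop ``pincer'' strategy. Place the two cops on two vertices $u,v$ that split $C_n$ into two internally disjoint arcs; after the robot picks his starting vertex he lies on one arc $P$, with one cop at each end of $P$ (if the robot starts at $u$ or $v$, his first move forces him to the interior of one of the arcs, or captures him, putting us in this situation). Thereafter, every round each cop steps one vertex along $P$ toward the other cop. The crucial point is that the robot can never escape past a cop: to get ``around'' the cop that just traversed the edge $e$ of $P$ nearest him, the robot would have to traverse $e$ in the opposite direction, and this is exactly capture rule~(ii). So the robot stays confined to the interior of the sub-arc of $P$ strictly between the two cops, and that sub-arc loses two edges per round.

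To finish, I would track the potential $s$ equal to the number of edges of that sub-arc; the robot sits at an interior vertex at distances $d_A,d_B\ge 1$ from the two cops along the arc, with $d_A+d_B=s$. A brief check of the robot's three options each round (step toward one cop, stay, step toward the other) shows that any move not resulting in immediate capture leaves $d_A+d_B=s-2$; and once $s$ has shrunk to $2$ or to $3$, one verifies that \emph{every} move of the robot is a capture (by rule~(i) when a cop lands on him, by rule~(ii) when he tries to slip past). Since $s$ strictly decreases by $2$ while the robot survives, is bounded below, and never drops to $1$ (the $s\in\{2,3\}$ endgame catches him first), the robot is captured, establishing $c_{cr}(C_n)\le 2$.

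The only real work is the small-$s$ endgame bookkeeping — handling the parity cases $s=2$ versus $s=3$ and the boundary configurations where a cop's move lands on the robot's current vertex or on an end of the sub-arc — together with the clean statement that rule~(ii) forbids the robot from ever leaving the sub-arc. The degenerate cases (small $n$, or the robot starting on $u$ or $v$) are absorbed into the same argument. No conceptually difficult step is involved; the one point worth flagging is that the edge-traversal rule~(ii) is precisely what makes two cops sufficient here — with capture only on shared vertices, the robot could circle the cycle forever just behind a pursuing cop.
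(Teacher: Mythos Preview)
Your proof is correct and follows essentially the same approach as the paper: the lower bound via the $2$-core observation is identical, and your pincer strategy is exactly the paper's ``start together and walk in opposite directions'' idea, just with a more general starting configuration and a detailed potential-function justification that the paper omits. The only cosmetic difference is that the paper places both cops on the \emph{same} vertex initially, which sidesteps your degenerate-placement case analysis; otherwise the two arguments coincide.
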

\begin{proof}
A cycle is a 2-core and hence at least two cops are required. Two cops can capture by starting on the same vertex and moving around the cycle in different directions.
\end{proof}
 
 To characterize when one cop will suffice is now straightforward.
 
\begin{thm}\label{thm: trees} Let $G$ be a connected graph. Then $c_{cr}(G)=1$ if and only if $G$ is a tree.
\end{thm}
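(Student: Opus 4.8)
The plan is to prove the two implications separately; the forward one is essentially immediate, and all the work is in the reverse direction. \emph{If $G$ is not a tree, then $c_{cr}(G)\neq 1$:} a connected non-tree contains a cycle, and since iteratively deleting leaves (vertices of degree less than $2$) can never destroy a cycle, $G$ has a non-empty $2$-core. Theorem~\ref{thm: k-core} with $k=2$ then gives $c_{cr}(G)\geq 2$. As any graph on at least one vertex needs at least one cop, this settles that side.

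\emph{If $G$ is a tree, then one cop suffices:} I would exhibit an explicit cop strategy together with a monotone potential. Let $c$ and $r$ denote the cop's and robot's vertices at the start of a round. The cop's rule is: if $r=c$, stay put (this can only happen at the very first round, since at any later round it would mean the robot had ended its previous move on the cop's vertex and hence been captured; and in this case the robot is now forced to move off $c$, either losing at once or landing adjacent to the cop); otherwise move to the unique neighbour $c'$ of $c$ on the path from $c$ to $r$. Thus I may assume the cop moves in every round after the first. Define the potential $\Phi$ to be the number of vertices in the component $T_r$ of $G-c$ containing $r$. The claim is that after each cop move the robot is either captured or $\Phi$ has dropped by at least $1$; since $\Phi$ is a non-negative integer bounded by $|V(G)|$, the robot is caught in finitely many rounds.

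The claim splits according to $d(c,r)$. If $d(c,r)\geq 2$, then $c'\neq r$ and $c'$ is the vertex of $T_r$ adjacent to $c$; deleting $c'$ splits $T_r$ into subtrees $T_1,\dots,T_m$, each a component of $G-c'$. The robot lies in some $T_i$; every neighbour of $r$ lies in $T_r$ (as $r$ is not adjacent to $c$), the only one outside $T_i$ being the now-occupied vertex $c'$, onto which the robot cannot move without being captured. So the robot stays in $T_i$, and since $c'\notin T_i$ the new potential is at most $\Phi-1$. If $d(c,r)=1$, the cop crosses the edge $cr$ onto $r$. The robot cannot stay (it would share the cop's vertex) and cannot move to $c$ (re-traversing the edge $cr$ the cop just used, giving capture by rule (ii)); if $r$ is a leaf these are its only options and it is caught, and otherwise it must move to some neighbour $x\neq c$ of $r$, which lies in $T_r\setminus\{r\}$, so again the new potential is at most $\Phi-1$.

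The step I expect to be most delicate is the case $d(c,r)=1$: one must invoke the edge-traversal rule (ii) to keep the robot from slipping past the cop, and then verify that the robot's reachable territory genuinely shrinks rather than merely shifting sideways. Everything else is the standard ``the robot's subtree only shrinks'' argument for one cop on a tree, carefully adapted to the simultaneous-move timing and the two-part capture rule.
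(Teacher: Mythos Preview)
Your proof is correct and follows essentially the same approach as the paper: the non-tree direction invokes the $2$-core/cycle together with Theorem~\ref{thm: k-core}, and the tree direction has the cop step toward the robot along the unique path. The paper's argument for the tree case is a one-line sketch (``the robot cannot pass the cop and is eventually forced onto a leaf''), whereas you supply a genuine termination argument via the shrinking-component potential $\Phi$ and make explicit the appeal to capture rule~(ii) in the $d(c,r)=1$ case; this is added rigour rather than a different idea.
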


\begin{proof}
If $G$ is a tree, then the cop places herself somewhere close to the centre of the tree, and the robot places himself anywhere. Since $G$ is a tree, there exists a unique shortest path between the cop and the robot. The cop moves along the first edge of this path. The robot cannot pass the cop and is eventually forced onto a leaf and is caught on the next move.

If $G$ is not a tree then there exists a cycle in $G$. This cycle is a 2-core and, by Theorem \ref{thm: k-core}, $c_{cr}(G) >1$. 
\end{proof}

\section{Retracts}
A typical technique  to capture the robber is to capture his image on a subgraph first, see the papers \cite{AignerF,BI,NowaW} and the book \cite{BonatoN} for others. The following definitions are standard and can be found in \cite{BonatoN}.
\begin{definition}\normalfont{\cite{BonatoN}} Given a graph $G$, an induced subgraph $R$ is a \textit{retract} if there is a \textit{retraction} 
map $f: V(G)\rightarrow V(R)$
where (i) if $x\sim y$ then $f(x)\sim f(y)$ or $f(x)=f(y)$, and (ii) if $x\in V(R)$ then $f(x)=x$. 
\end{definition}

For example, let $P_m=\{a_1,a_2,\ldots, a_m\}$ and $P_n=\{b_1,b_2,\ldots,b_n\}$. In both cases, if  $G=P_m\square P_n$, the Cartesian grid, or $G=P_m\boxtimes P_n$, the strong grid, then for each $i$, $\{a_i\}\times P_n$ is a retract of $G$ with $f((a_r,b_s))=(a_i,b_s)$. Similarly, $P_m\times \{b_j\}$ is also a retract of $G$ with $g((a_r,b_s)) = (a_r,b_j)$. Unless otherwise specified, these will be the retraction maps used when dealing with paths.

\begin{definition}\normalfont{\cite{BonatoN}}
Given a graph $G$, a retract $R$, and a retraction map $f:G\rightarrow R$, then $f(x)$ is the  \textit{shadow} of vertex $x$.
\end{definition}

A retraction map is  \textit{edge preserving} in that edges are mapped to edges or the endpoints are mapped to the same vertex.
This means that the shadow of the robot in a retract traces out a walk on the retract.  Retracts are well known in the Cops and Robbers literature (see \cite{NowaW,BI} for example). Retracts are also important substructures in the Cheating Robot model.  
 
 \begin{thm}\label{thm:retract} If $R$ is a retract of  $G$  then  $c_{cr}(R)\leq c_{cr}(G)$. 
\end{thm}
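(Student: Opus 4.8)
The plan is to adapt the classical ``shadow strategy'' argument for retracts from Cops and Robbers (cf.\ \cite{NowaW,BI,BonatoN}) to the Cheating Robot timing. Put $k=c_{cr}(G)$ and fix a winning strategy $\sigma$ for $k$ cops on $G$. I want $k$ cops to win on $R$. The idea is that the cops playing on $R$ secretly run a ``virtual'' game on $G$, using $\sigma$ there, and project everything back onto $R$ via the retraction $f$.

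First I would set up the simulation and its invariant: at the start of every round the $i$-th real cop (on $R$) sits on $f(c_i)$, where $c_i$ is the position of the $i$-th virtual cop (on $G$), and the virtual robber sits on the same vertex as the real robot, a vertex of $R$ which is therefore fixed by $f$. For initialisation, the real cops place on $f(c_1),\dots,f(c_k)$, where $c_1,\dots,c_k$ is $\sigma$'s opening; the real robot then places on some $r\in V(R)$, and we declare $r$ to be the virtual robber's start. In a round: since $\sigma$'s cop move is a function of the history only, we first compute the virtual cops' move to $c_1',\dots,c_k'$; the real cops move to $f(c_1'),\dots,f(c_k')$, a legal move because $c_i\sim c_i'$ or $c_i=c_i'$ and $f$ is edge preserving, so $f(c_i)\sim f(c_i')$ or $f(c_i)=f(c_i')$. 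The real robot, now seeing the real cops' moves, slides to some $r'\in V(R)$ in the closed neighbourhood of $r$; since $R$ is induced, $r\to r'$ is a legal move on $G$, so we replay it as the virtual robber's move. The invariant is restored.

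Then I would transfer the capture. As $\sigma$ is winning and the virtual robber's play is a legal (if possibly suboptimal) play, the virtual game ends in a capture. If it ends by rule (i), with some $c_i'$ equal to $r'$ at the end of the robber's move, then $f(c_i')=f(r')=r'$, so the $i$-th real cop shares the real robot's vertex: capture on $R$. If it ends by rule (ii), the virtual robber traverses an edge also traversed by some virtual cop, i.e.\ $\{r,r'\}=\{c_i,c_i'\}$; here $r\ne r'$ forces $c_i\ne c_i'$, and since $r,r'\in V(R)$ we get $\{f(c_i),f(c_i')\}=\{r,r'\}$. But the $i$-th real cop traversed exactly $\{f(c_i),f(c_i')\}$ this round, so it traversed the same edge as the real robot: capture on $R$. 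Hence $k$ cops suffice on $R$, and $c_{cr}(R)\le k=c_{cr}(G)$.

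The point where the Cheating Robot model really differs from the ordinary game, and the thing to get right, is the ordering of the simulated moves: one must be sure the virtual cops' response can be computed before the real robot commits, and that the real robot's reply can then be legally replayed as the virtual robber's insightful reply. This is fine precisely because a cop strategy is a function of past play only, so the dependency ``virtual cops $\to$ real cops $\to$ real robot $\to$ virtual robber'' is acyclic. The only other step needing care is checking that in rule (ii) the relevant cop edge does not collapse under $f$, which holds because both its endpoints already lie in $R$.
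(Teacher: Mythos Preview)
Your argument is correct and follows essentially the same shadow-strategy approach as the paper: run the cops' winning strategy on $G$ against the robot confined to $R$, and project the cop positions into $R$ via $f$. Your write-up is in fact more careful than the paper's, which focuses on the terminal move and argues that since the robot has no escape in $G$ it has none in $R$; you instead maintain the invariant throughout and verify both capture rules separately, in particular noting for rule~(ii) that the offending cop edge has both endpoints already in $R$ and so does not collapse under $f$---a point the paper does not make explicit.
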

\begin{proof} Let $f:G\rightarrow R$ be a retraction map. We restrict the robot to playing on $R$ but allow $c_{cr}(G)$ cops to play on $G$. Now, consider the situation with $c_{cr}(G)$ cops on
$R$ where each cop in $R$ is the image of the corresponding cop in $G$.  Note that, because $f$ is edge-preserving, any move by a cop in $G$ is a legal move for the image of the cop. The cops on $G$ have a winning strategy. On the last move of this winning strategy, 
the cops move to occupy all, except for possibly one, neighbouring vertices to the robot and the one cop moves to the vertex containing the robot, 
moving from this unoccupied vertex if it exists. The robot has no escape, in $G$, and so has no escape in $R$. These vertices include all the vertices in $R$ and a cop has moved on to the vertex occupied by the robot.  The robot has been caught by the $c_{cr}(G)$ cops in $R$.
\end{proof}

The follow-up result in \cite{BI} has $c(G)\leq \max\{c(R),c(G\setminus R)+1\}$. However, this is not true in the Cheating Robot
model. In Figure~\ref{fig: retraction example}, $R=C_6=G\setminus R$, $c_{cr}(R) = c_{cr}(G\setminus R)=2$, and $R$ is a retract of $G$.
However, every vertex of $G$ has degree 5, i.e., $G$ is a 5-core, thus $c_{cr}(G)\geq 5>3=\max\{c(R),c(G\setminus R)+1\}$.

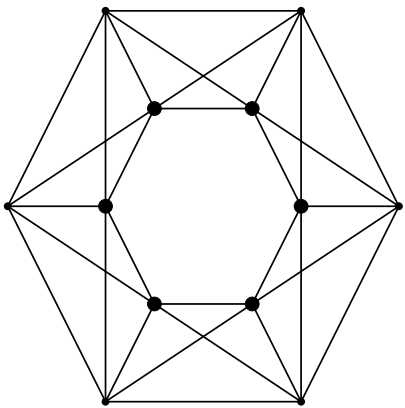
\begin{figure}[htb]
\begin{center}
\scalebox{0.65}{
\begin{tikzpicture}
\draw [line width=1.pt] (-1.,5.)-- (1.,5.);
\draw [line width=1.pt] (-1.,5.)-- (-2.,3.);
\draw [line width=1.pt] (-2.,3.)-- (-1,1.);
\draw [line width=1.pt] (-1.,1.)-- (1,1.);
\draw [line width=1.pt] (2.,3)-- (1,1.);
\draw [line width=1.pt] (2.,3.)-- (1,5.);

\draw [line width=1.pt] (-2.,7.)-- (2.,7.);
\draw [line width=1.pt] (-2.,7.)-- (-4.,3.);
\draw [line width=1.pt] (-4.,3.)-- (-2,-1.);
\draw [line width=1.pt] (-2.,-1.)-- (2,-1.);
\draw [line width=1.pt] (4.,3)-- (2,-1.);
\draw [line width=1.pt] (4.,3.)-- (2,7.);

\draw [ line width=1.pt] (-1.,5.)-- (-2.,7.);
\draw [line width=1.pt] (1.,5.)-- (2.,7.);
\draw [line width=1.pt] (-4.,3.)-- (-2,3.);
\draw [line width=1.pt] (4.,3.)-- (2,3.);
\draw [line width=1.pt] (1.,1)-- (2,-1.);
\draw [line width=1.pt] (-1.,1.)-- (-2,-1.);

\draw [line width=1.pt] (-1.,5.)-- (2.,7.);
\draw [line width=1.pt] (1.,5.)-- (-2.,7.);
\draw [line width=1.pt] (1,5.)-- (4,3.);
\draw [line width=1.pt] (2.,7.)-- (2,3.);
\draw [line width=1.pt] (1.,1)-- (-2,-1.);
\draw [line width=1.pt] (-1.,1.)-- (2,-1.);

\draw [line width=1.pt] (-1,5.)-- (-4,3.);
\draw [line width=1.pt] (-2.,7.)-- (-2,3.);
\draw [line width=1.pt] (1.,1)-- (4,3.);
\draw [line width=1.pt] (2.,3.)-- (2,-1.);
\draw [line width=1.pt] (-1.,1)-- (-4,3.);
\draw [line width=1.pt] (-2.,3.)-- (-2,-1.);
\begin{scriptsize}
\draw [fill=black] (-1.,5.) circle (4.0pt);
\draw [fill=black] (1,5.) circle (4.0pt);
\draw [fill=black] (-1.,1.) circle (4.0pt);
\draw [fill=black] (1,1.) circle (4.0pt);
\draw [fill=black] (-2.,3.) circle (4.0pt);
\draw [fill=black] (2,3.) circle (4.0pt);

\draw [fill=black] (-2.,7.) circle (2.0pt);
\draw [fill=black] (2,7.) circle (2.0pt);
\draw [fill=black] (-2.,-1.) circle (2.0pt);
\draw [fill=black] (2,-1.) circle (2.0pt);
\draw [fill=black] (-4.,3.) circle (2.0pt);
\draw [fill=black] (4,3.) circle (2.0pt);

\end{scriptsize}
\end{tikzpicture}}
\caption{A graph $G$ with retract, $R$, which is the graph induced by the bold vertices.}\label{fig: retraction example}
\end{center}
\end{figure}

Despite this, for this paper, the method by which the cops capture the robot is important when considering  products and we highlight this as a result in its own right.
\begin{lemm}\label{lem:retract} Given a graph $G$, a retract $R$, a retraction map $f:G\rightarrow R$ and $c_{cr}(R)$ cops. If the cops, but not the robot, are restricted to $R$ then the cops can capture the robot's shadow. At the completion of the capture move and every move thereafter, only one cop is needed so as to move onto the shadow as part of the cops' turn. \end{lemm}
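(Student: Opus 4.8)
The plan is to argue that the robot's shadow is, from the cops' point of view, nothing more than a move-restricted Cheating Robot living on $R$, and then to beat it with a winning $R$-strategy.

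First I would isolate two properties of the shadow. Since $f$ is edge preserving, if the robot moves from $x$ to some $x'$ in its closed neighbourhood, the shadow moves from $f(x)$ to $f(x')$, and $f(x')$ lies in the closed neighbourhood of $f(x)$ in $R$; so every shadow move is a legal token move on $R$. Moreover, because the cops are confined to $R$, the information the robot acts on — the cops' moves in $G$ — is exactly the cops' moves in $R$, so the shadow's next vertex is selected after, and in full knowledge of, the cops' moves on $R$. Consequently every play of the real game pushes forward under $f$ to a legitimate play of the Cheating Robot game on $R$ whose ``robot'' is the shadow, the only difference being that the shadow's available moves, $\{f(y): y\sim x \text{ or } y=x\}$, form a subset of the closed neighbourhood of $f(x)$ — a restriction that can only favour the cops.

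Next I would have the cops play a winning Cheating Robot strategy for $R$ with $c_{cr}(R)$ cops, pretending the shadow is the robot. That strategy defeats every behaviour of an unrestricted cheating robot on $R$, hence in particular the shadow, and after finitely many rounds the cops execute a move that captures the shadow. (Should the real robot blunder onto a cop's vertex first, the game is over; and since that vertex lies in $R$ it equals its own shadow, so a cop is on the shadow anyway.) Here I would record the elementary fact (also underlying the last move in the proof of Theorem~\ref{thm:retract}) that a capturing move must leave a cop on the target's current vertex, since a token that stays put can be caught only by rule~(i). Thus the capture move places some distinguished cop $C$ on the shadow's current vertex.

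It remains to maintain, with $C$ alone, the invariant that \emph{at the start of each cops' turn $C$ is on or adjacent to the shadow in $R$}. This holds just after the capture move: $C$ sits on the shadow's previous vertex, and the shadow has since moved to a vertex in the closed neighbourhood of that vertex. Inductively, on a cops' turn $C$ moves onto the shadow's current vertex — legal because $C$ is on or adjacent to it, and well defined because the robot, and hence the shadow, moves only afterwards — and then the robot's move displaces the shadow by at most one edge, so $C$ is again on or adjacent to it; the other $c_{cr}(R)-1$ cops play no further role in this task. The one genuinely delicate point is precisely this timing bookkeeping: $C$ may trail the shadow by one robot-move, but since the shadow never jumps, $C$ can always close the gap on its own turn; everything else is routine once the shadow-as-restricted-robot correspondence is in place.
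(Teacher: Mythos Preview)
Your argument is correct and follows the same line as the paper's proof: the edge-preserving property of $f$ makes the shadow behave like a (restricted) cheating robot on $R$, so $c_{cr}(R)$ cops capture it, and thereafter a single cop can step onto the shadow each turn because the shadow moves at most one edge per round. Your write-up is considerably more detailed than the paper's---in particular your justification that the capturing move must place a cop on the shadow's vertex (via the ``staying put forces rule~(i)'' observation) and your explicit invariant for the trailing cop are spelled out where the paper leaves them implicit---but the underlying idea is identical.
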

\begin{proof}
Since $f$ is edge-preserving then the shadow of the robot, on a turn, can only move to an adjacent vertex. By assumption then,
$c_{cr}(R)$ cops suffice to capture the shadow and at capture one cop, say $c_1$, is on the same vertex as the shadow.  However, the robot is not the shadow and may move so the shadow has moved to an adjacent vertex. The cop $c_1$, as part of the cops turn, can move on to the shadow.
\end{proof}

If each vertex in $R$ is adjacent to at most one vertex in $G\setminus R$ then the cop that follows the robot's shadow blocks the robot's entry from
 $G\setminus R$ back into $R$. In other cases, more cops are needed to follow the shadow in order to prevent the robot entering $R$. The first case is typical of Cartesian products which are considered next.


\section{Cartesian Product of Graphs}
When $G$ is a product then $G$ inherits some properties of the multiplicands. These properties have been useful when determining
 the cop-number of graphs.
In this section we consider the Cartesian product of graphs and the strong product in the next. See \cite{IK} for more on graph products.

\begin{definition} The Cartesian product of graphs $G$ and $H$, written $G\square H$, has $V(G\square H)=\{(g,h): g\in V(G), h\in V(H)\}$
and $(g,h)$ is adjacent to $(g',h')$ if either $g=g'$ and $h$ is adjacent to $h'$ or $g$ is adjacent to $g'$ and $h=h'$.
\end{definition}

This definition is easily extended to the product of more than two graphs. In general, a vertex of the product is $(a_1,a_2,\ldots, a_n)$ where
$a_i$ is in the $i^\text{th}$-coordinate.

Given a graph $F$ with $c_{cr}(F)$ cops, consider the last move by the cops in their winning strategy. All vertices adjacent to the robot
will be occupied except for possibly those from which the cops will move to the robot's vertex. The robot has nowhere safe to move. However, if 
these cops do not move, but the other cops do, then the robot is not captured but neither can he move. We refer to this as the \textit{surrounding} strategy. 
It is an integral part of capturing the robot on the Cartesian product.

For connected graphs $G$ and $H$, To\v{s}i\'{c} \cite{tos} showed that  $c(G\square H)\leq c(G)+c(H)$. The same is true for the Cheating Robot. For an example, see Figure~\ref{fig: cartesian product example}.

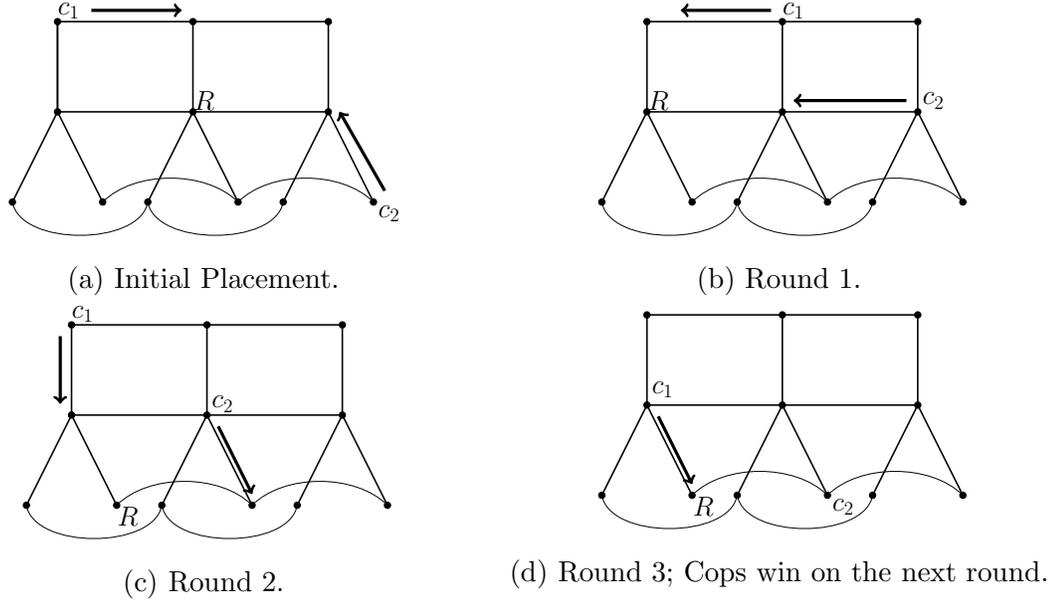
\begin{figure}
\begin{subfigure}{.5\textwidth}
\begin{center}
\scalebox{0.6}{
\begin{tikzpicture}
\draw [->,line width=2.pt] (0.75,2.25)-- (2.75,2.25);
\draw [->,line width=2.pt] (7.25,-1.75)--(6.25,0);
\draw [line width=1.pt] (0.,2.)-- (0.,0.);
\draw [line width=1.pt] (0.,2.)-- (0.,0.);
\draw [line width=1.pt] (3.,2.)-- (3.,0.);
\draw [line width=1.pt] (6.,2.)-- (6.,0.);

\draw [line width=1.pt] (-1.,-2.)-- (0.,0.);
\draw [line width=1.pt] (1.,-2.)-- (0.,0.);

\draw [line width=1.pt] (2.,-2.)-- (3.,0.);
\draw [line width=1.pt] (4.,-2.)-- (3.,0.);

\draw [line width=1.pt] (5.,-2.)-- (6.,0.);
\draw [line width=1.pt] (7.,-2.)-- (6.,0.);

\draw [line width=1.pt] (0.,0.)-- (3.,0.);
\draw [line width=1.pt] (3.,0.)-- (6.,0.);

\draw [line width=1.pt] (0.,2.)-- (3,2.);
\draw [line width=1.pt] (3.,2.)-- (6,2.);

\draw (-1,-2) to[out=280,in=260,loop, distance =1cm] (2,-2);
\draw (2,-2) to[out=280,in=260,loop, distance =1cm] (5,-2);

\draw (1,-2) to[out=45,in=135,loop, distance =1cm] (4,-2);
\draw (4,-2) to[out=45,in=135,loop, distance =1cm] (7,-2);
\begin{scriptsize}
\draw [fill=black] (0.,2.) circle (2.0pt);
\draw [fill=black] (3.,2.) circle (2.0pt);
\draw [fill=black] (6.,2.) circle (2.0pt);

\draw [fill=black] (0.,0.) circle (2.0pt);
\draw [fill=black] (3.,0.) circle (2.0pt);
\draw [fill=black] (6.,0.) circle (2.0pt);

\draw [fill=black] (-1.,-2.) circle (2.0pt);
\draw [fill=black] (1.,-2.) circle (2.0pt);
\draw [fill=black] (2.,-2.) circle (2.0pt);
\draw [fill=black] (4.,-2.) circle (2.0pt);
\draw [fill=black] (5.,-2.) circle (2.0pt);
\draw [fill=black] (7.,-2.) circle (2.0pt);
\draw[color=black] (3.25,0.25) node {\Large $R$};
\draw[color=black] (0.25,2.25) node {\Large $c_{1}$};
\draw[color=black] (7.35,-2.25) node {\Large $c_{2}$};
\end{scriptsize}
\end{tikzpicture}}
\caption{Initial Placement.}
\end{center}
\end{subfigure}
\begin{subfigure}{.5\textwidth}
\begin{center}
\scalebox{0.6}{
\begin{tikzpicture}
\draw [->,line width=2.pt] (2.75,2.25)--  (0.75,2.25);
\draw [->,line width=2.pt]  (5.75,0.25)--(3.25,0.25);
\draw [line width=1.pt] (0.,2.)-- (0.,0.);
\draw [line width=1.pt] (3.,2.)-- (3.,0.);
\draw [line width=1.pt] (6.,2.)-- (6.,0.);

\draw [line width=1.pt] (-1.,-2.)-- (0.,0.);
\draw [line width=1.pt] (1.,-2.)-- (0.,0.);

\draw [line width=1.pt] (2.,-2.)-- (3.,0.);
\draw [line width=1.pt] (4.,-2.)-- (3.,0.);

\draw [line width=1.pt] (5.,-2.)-- (6.,0.);
\draw [line width=1.pt] (7.,-2.)-- (6.,0.);

\draw [line width=1.pt] (0.,0.)-- (3.,0.);
\draw [line width=1.pt] (3.,0.)-- (6.,0.);

\draw [line width=1.pt] (0.,2.)-- (3,2.);
\draw [line width=1.pt] (3.,2.)-- (6,2.);

\draw (-1,-2) to[out=280,in=260,loop, distance =1cm] (2,-2);
\draw (2,-2) to[out=280,in=260,loop, distance =1cm] (5,-2);

\draw (1,-2) to[out=45,in=135,loop, distance =1cm] (4,-2);
\draw (4,-2) to[out=45,in=135,loop, distance =1cm] (7,-2);
\begin{scriptsize}
\draw [fill=black] (0.,2.) circle (2.0pt);
\draw [fill=black] (3.,2.) circle (2.0pt);
\draw [fill=black] (6.,2.) circle (2.0pt);

\draw [fill=black] (0.,0.) circle (2.0pt);
\draw [fill=black] (3.,0.) circle (2.0pt);
\draw [fill=black] (6.,0.) circle (2.0pt);

\draw [fill=black] (-1.,-2.) circle (2.0pt);
\draw [fill=black] (1.,-2.) circle (2.0pt);
\draw [fill=black] (2.,-2.) circle (2.0pt);
\draw [fill=black] (4.,-2.) circle (2.0pt);
\draw [fill=black] (5.,-2.) circle (2.0pt);
\draw [fill=black] (7.,-2.) circle (2.0pt);
\draw[color=black] (0.25,0.25) node {\Large $R$};
\draw[color=black] (3.25,2.25) node {\Large $c_{1}$};
\draw[color=black] (6.35,0.25) node {\Large $c_{2}$};
\end{scriptsize}
\end{tikzpicture}}
\caption{Round 1.}
\end{center}
\end{subfigure}
\begin{subfigure}{0.5\textwidth}
\begin{center}
\scalebox{0.6}{
\begin{tikzpicture}
\draw [->,line width=2.pt] (-0.25,1.75)--  (-0.25,0.25);
\draw [->,line width=2.pt]  (3.25,-0.25)--(4.0,-1.75);
\draw [line width=1.pt] (0.,2.)-- (0.,0.);
\draw [line width=1.pt] (3.,2.)-- (3.,0.);
\draw [line width=1.pt] (6.,2.)-- (6.,0.);

\draw [line width=1.pt] (-1.,-2.)-- (0.,0.);
\draw [line width=1.pt] (1.,-2.)-- (0.,0.);

\draw [line width=1.pt] (2.,-2.)-- (3.,0.);
\draw [line width=1.pt] (4.,-2.)-- (3.,0.);

\draw [line width=1.pt] (5.,-2.)-- (6.,0.);
\draw [line width=1.pt] (7.,-2.)-- (6.,0.);

\draw [line width=1.pt] (0.,0.)-- (3.,0.);
\draw [line width=1.pt] (3.,0.)-- (6.,0.);

\draw [line width=1.pt] (0.,2.)-- (3,2.);
\draw [line width=1.pt] (3.,2.)-- (6,2.);

\draw (-1,-2) to[out=280,in=260,loop, distance =1cm] (2,-2);
\draw (2,-2) to[out=280,in=260,loop, distance =1cm] (5,-2);

\draw (1,-2) to[out=45,in=135,loop, distance =1cm] (4,-2);
\draw (4,-2) to[out=45,in=135,loop, distance =1cm] (7,-2);
\begin{scriptsize}
\draw [fill=black] (0.,2.) circle (2.0pt);
\draw [fill=black] (3.,2.) circle (2.0pt);
\draw [fill=black] (6.,2.) circle (2.0pt);

\draw [fill=black] (0.,0.) circle (2.0pt);
\draw [fill=black] (3.,0.) circle (2.0pt);
\draw [fill=black] (6.,0.) circle (2.0pt);

\draw [fill=black] (-1.,-2.) circle (2.0pt);
\draw [fill=black] (1.,-2.) circle (2.0pt);
\draw [fill=black] (2.,-2.) circle (2.0pt);
\draw [fill=black] (4.,-2.) circle (2.0pt);
\draw [fill=black] (5.,-2.) circle (2.0pt);
\draw [fill=black] (7.,-2.) circle (2.0pt);
\draw[color=black] (1.25,-2.25) node {\Large $R$};
\draw[color=black] (0.25,2.25) node {\Large $c_{1}$};
\draw[color=black] (3.35,0.25) node {\Large $c_{2}$};
\end{scriptsize}
\end{tikzpicture}}
\caption{Round 2.}
\end{center}
\end{subfigure}
\begin{subfigure}{0.5\textwidth}
\begin{center}
\scalebox{0.6}{
\begin{tikzpicture}
\draw [->,line width=2.pt] (0.25,-0.25)--  (1.,-1.75);
\draw [line width=1.pt] (0.,2.)-- (0.,0.);
\draw [line width=1.pt] (3.,2.)-- (3.,0.);
\draw [line width=1.pt] (6.,2.)-- (6.,0.);

\draw [line width=1.pt] (-1.,-2.)-- (0.,0.);
\draw [line width=1.pt] (1.,-2.)-- (0.,0.);

\draw [line width=1.pt] (2.,-2.)-- (3.,0.);
\draw [line width=1.pt] (4.,-2.)-- (3.,0.);

\draw [line width=1.pt] (5.,-2.)-- (6.,0.);
\draw [line width=1.pt] (7.,-2.)-- (6.,0.);

\draw [line width=1.pt] (0.,0.)-- (3.,0.);
\draw [line width=1.pt] (3.,0.)-- (6.,0.);

\draw [line width=1.pt] (0.,2.)-- (3,2.);
\draw [line width=1.pt] (3.,2.)-- (6,2.);

\draw (-1,-2) to[out=280,in=260,loop, distance =1cm] (2,-2);
\draw (2,-2) to[out=280,in=260,loop, distance =1cm] (5,-2);

\draw (1,-2) to[out=45,in=135,loop, distance =1cm] (4,-2);
\draw (4,-2) to[out=45,in=135,loop, distance =1cm] (7,-2);
\begin{scriptsize}
\draw [fill=black] (0.,2.) circle (2.0pt);
\draw [fill=black] (3.,2.) circle (2.0pt);
\draw [fill=black] (6.,2.) circle (2.0pt);

\draw [fill=black] (0.,0.) circle (2.0pt);
\draw [fill=black] (3.,0.) circle (2.0pt);
\draw [fill=black] (6.,0.) circle (2.0pt);

\draw [fill=black] (-1.,-2.) circle (2.0pt);
\draw [fill=black] (1.,-2.) circle (2.0pt);
\draw [fill=black] (2.,-2.) circle (2.0pt);
\draw [fill=black] (4.,-2.) circle (2.0pt);
\draw [fill=black] (5.,-2.) circle (2.0pt);
\draw [fill=black] (7.,-2.) circle (2.0pt);
\draw[color=black] (1.25,-2.25) node {\Large $R$};
\draw[color=black] (0.35,0.35) node {\Large $c_{1}$};
\draw[color=black] (4.35,-2.25) node {\Large $c_{2}$};
\end{scriptsize}
\end{tikzpicture}}
\caption{Round 3; Cops win on the next round.}
\end{center}
\end{subfigure}
\caption{Example of game play on $P_{3}\square S_{3}$, using the strategy from Theorem~\ref{thm:cartesianproduct}. Arrows indicate where the cops ($c_{1}$, $c_{2}$) will move on the next round; the robot ($R$) is aware of this information before he is required to move.}\label{fig: cartesian product example}
\end{figure}

\begin{thm}\label{thm:cartesianproduct} If G and H are each connected then $c_{cr}(G\square H)\leq c_{cr}(G)+c_{cr}(H)$.
\end{thm}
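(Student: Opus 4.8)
\emph{Proof proposal.} The plan is to adapt To\v{s}i\'{c}'s partition strategy for $c(G\square H)\le c(G)+c(H)$ to the Cheating Robot, using the surrounding strategy described just above together with Lemma~\ref{lem:retract}. Write $\pi_G,\pi_H$ for the coordinate projections on $G\square H$, and split the $c_{cr}(G)+c_{cr}(H)$ cops into a set $A$ of $c_{cr}(G)$ cops, charged with the first coordinate, and a set $B$ of $c_{cr}(H)$ cops, charged with the second. For any fixed $h_0$ the $G$-fibre $G\times\{h_0\}$ is a retract of $G\square H$ isomorphic to $G$ (retraction $(g,h)\mapsto(g,h_0)$), so the robot's $G$-shadow $\pi_G(\text{robot})$ traces a walk on $G$; symmetrically for $H$.

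First I would describe the two sub-strategies. The cops of $A$, reading only the first coordinates of all tokens, run a winning Cheating Robot strategy for $G$ against the $G$-shadow; any move prescribed in $G$ is realisable in $G\square H$ by changing only the first coordinate, and the $G$-shadow reacts only after the cops of $A$ have committed, exactly as a Cheating Robot robber in $G$ would, so the $G$-strategy remains valid. By Lemma~\ref{lem:retract} (and the surrounding strategy), $A$ can capture the $G$-shadow and thereafter \emph{surround} it, i.e.\ keep the first coordinates of $A$ occupying all of $N_G(\pi_G(\text{robot}))$, with the feature that a completed surround is stable under ``freezing'' all the surrounding cops. The cops of $B$ do the same in the second coordinate. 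The combined position we aim for is the one in which the robot sits at $(g_R,h_R)$, every token of $A$ has second coordinate $h_R$ with first coordinates surrounding $g_R$, and every token of $B$ has first coordinate $g_R$ with second coordinates surrounding $h_R$: from there a single surrounding-capture move (one cop of $A$ steps onto $(g_R,h_R)$, the others staying put) ends the game by capture rule~(i), or by rule~(ii) if the robot bolts along the one vacated edge.

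The crux is to reach that combined position, and this is where I expect the main difficulty. Because on any turn the robot alters at most one coordinate, a turn on which he does not change his first coordinate is ``free'' for $A$: the $G$-surround momentarily needs no maintenance, so all of $A$ may instead advance toward the robot's current second coordinate, and dually for $B$. One must show this slack suffices for $A$ to bring all of its tokens to the robot's $H$-level while never surrendering the $G$-surround, and simultaneously for $B$ in the mirror sense. The obstacle specific to the Cheating Robot is the move order --- the cops commit before the robot reacts --- so ``holding a surround at the robot's other-coordinate level'' cannot be achieved merely by copying the robot's last step. I would handle this with a potential/interleaving argument tracking, say, the distance in $H$ from $A$'s common second coordinate to the robot's second coordinate together with the progress of $B$'s strategy (and the mirror quantities), showing that on each turn at least one potential strictly decreases while the two surrounds are never simultaneously broken; the base cases where $G$ or $H$ is a single vertex, and the worked example $P_3\square S_3$ above, indicate the shape of the bookkeeping.
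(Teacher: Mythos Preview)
Your overall architecture --- split the cops into a $G$-team $A$ and an $H$-team $B$, use the surrounding strategy in each factor, and finish once both surrounds coexist --- is exactly the paper's. The gap is in the order of operations, at the very point you flag as the crux. You have $A$ first run the $G$-strategy against the robot's $G$-shadow (holding $A$'s second coordinates at some fixed $h_0$), reach a shadow-surround, and only then try to slide $h_0$ toward the robot's level. But a surround of the \emph{shadow} does not constrain the robot: with $A$ on $N_G(a)\times\{h_0\}$ and the robot at $(a,x)$, $x\ne h_0$, the robot may step to $(a',x)$ for any $a'\in N_G(a)$ without being captured, since the cop at $(a',h_0)$ occupies a different vertex and no edge is shared. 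In the projected $G$-game this is the shadow walking onto a cop --- fatal for an actual robot, harmless for a shadow. After such a step $N_G(a)$ is in general not within one move of $N_G(a')$, so $c_{cr}(G)$ cops have no guaranteed way to re-establish the surround; Lemma~\ref{lem:retract} promises only that \emph{one} cop can track the shadow thereafter, not that a full surround can be dragged along. Your interleaving/potential sketch would have to manufacture exactly this mobile surround, and as stated it does not.

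The paper sidesteps the difficulty by reversing the order. In Phase~1 the $G$-cops ignore the $G$-strategy altogether and simply march their common second coordinate toward the robot's current one, while the $H$-cops do the mirror task in the first coordinate; because the robot alters at most one coordinate per turn, at least one team strictly gains each round, so eventually the $G$-cops share the robot's fibre $G\times\{x\}$. Only then (Phase~2) do they play the surrounding strategy, now against the actual robot on that fibre rather than a shadow. When the surround completes at $(a,x)$ every $G$-neighbour is occupied on that same fibre, so the robot's first coordinate is genuinely frozen; any escape in the second coordinate is answered next turn by translating the whole surround to the new fibre, and in the meantime the $H$-cops converge undisturbed (Phase~3). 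Aligning the fibre \emph{before} surrounding is the idea your proposal is missing.
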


\begin{proof} 
Let  $r=c_{cr}(G)$, $s=c_{cr}(H)$ and let $G'$ and $H'$ be subgraphs of $G$ and $H$ respectively with $|V(G')|=r$ and $|V(H')|=s$. In $G\square H$, place cops on each vertex of  $G'\square\{h\}$ and $\{g\}\square H'$ for some $g\in V(G)$ and $h\in V(H)$.  Call these the $G$-cops and the $H$-cops respectively. 
 We will show that this number of cops suffices to capture the robot. There are two parts to the strategy. 
First, the $G$-cops ($H$-cops) move to 
eventually have their second (first) coordinate the same as that of the robot. 
Next, they follow the surrounding strategy until both the $G$- and $H$-cops have the robot surrounded.

\textit{Phase 1:} Assume that the robot is at vertex $(a,x)$. If $h\ne x$ then the $G$-cops move to $G'\square\{h'\}$ where $h'$ is closer to $x$ than $h$. If $g\ne a$ then the $H$-cops move to $\{g'\}\square H'$ where $g'$ is closer to $a$ than $g$. In response, the robot  moves to 
 $(a',x')$ where at most one of the coordinates has changed, say to $(a',x)$. The $G$-cops have moved closer in the second coordinate 
 and the $H$-cops have maintained the distance in the first.  Similarly, the $H$-cops will have moved closer if the robot moves to $(a,x')$.
 
\textit{Phase 2:} Thus, on every move one group of cops gets closer (or both if the robot remains on the same vertex)
  and eventually, without loss of generality, the $G$-cops and the robot will be on $G\square \{x\}$. The $G$-cops play their surrounding strategy
 and the $H$-cops move as in Phase 1. If the robot moves to change the first coordinate, we are still in Phase 2 and the $H$-cops have moved closer in the second coordinate. If the robot moves to change the second coordinate we are back in Phase 1 but the $G$-cops have taken one move toward surrounding the
 robot and the $H$-cops are closer in the first coordinate.
 
 Eventually, without loss of generality, we can assume that the $G$-cops have surrounded the robot on  $G\square \{x\}$, say at $(a,x)$. We also assume that the $H$-cops are not on $\{a\}\square H$. 
 
\textit{Phase 3:}  The $G$-cops do not move. The $H$-cops move as in Phase 1. The robot can only not move or change the second coordinate thus $H$-cops have moved closer in the first coordinate. 

On the next move, the $G$-cops surround the robber on some $G\square \{y\}$ and the $H$-cops are closer in the first coordinate. Eventually, the robot will be on some $(a',x')$ and the $G$-cops  will have surrounded the robot on $G\square \{x'\}$ and the $H$-cops are on $\{a'\}\square H$. Again, the robot can only not move or change the second coordinate, thus we are playing in $\{a'\}\square H$. Eventually, the robot will be at some $(a',z)$  surrounded by the $G$-cops on $G\square \{z\}$ and surrounded by the $H$-cops on $\{a'\}\square H$. The robot has nowhere safe to move and will be captured on the next turn.
\end{proof}

This result give exact bounds for some Cartesian products.
\begin{cor}\label{cor:cartesiangrid}
Let $G$ be a graph, $\{T_{n_1}, T_{n_2},\ldots,T_{n_k}\}$ and $\{C_{m_1}, C_{m_2},\ldots,C_{m_\ell}\}$ be sets of trees and cycles respectively.
\begin{enumerate}
\item If  $G=\square_{i=1}^kT_{n_i}$ then  $c_{cr}(G) =k$.
\item If  $G=\square_{j=1}^\ell C_{m_j}$ then  $c_{cr}(G) =2\ell$.
\end{enumerate}
\end{cor}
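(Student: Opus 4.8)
The plan is to obtain the upper bounds from Theorem~\ref{thm:cartesianproduct} by induction on the number of factors, and the matching lower bounds from the $k$-core bound of Theorem~\ref{thm: k-core} applied to $G$ itself, using the fact that the minimum degree of a Cartesian product is the sum of the minimum degrees of the factors.

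For the upper bounds, first note that a Cartesian product of connected graphs is connected, so Theorem~\ref{thm:cartesianproduct} applies at every stage. In case~(1), induct on $k$: for $k=1$, $c_{cr}(T_{n_1})=1$ by Theorem~\ref{thm: trees}; for the inductive step write $G=\big(\square_{i=1}^{k-1}T_{n_i}\big)\square T_{n_k}$ and apply Theorem~\ref{thm:cartesianproduct} together with the inductive hypothesis to get $c_{cr}(G)\le (k-1)+1=k$. Case~(2) is identical, replacing Theorem~\ref{thm: trees} by Corollary~\ref{cor: cycles} ($c_{cr}(C_m)=2$) as the base case, yielding $c_{cr}(G)\le 2(\ell-1)+2=2\ell$.

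For the lower bounds, recall that in $G\square H$ the degree of $(g,h)$ equals $\deg_G(g)+\deg_H(h)$, so $\delta_{G\square H}=\delta_G+\delta_H$, and iterating gives $\delta\big(\square_i G_i\big)=\sum_i \delta_{G_i}$. A tree with at least one edge has minimum degree $1$ (its leaves), so $\delta(G)=k$ in case~(1), and a cycle is $2$-regular, so $\delta(G)=2\ell$ in case~(2). In each case $G$ is an induced subgraph of itself with $\delta_G\ge k$ (respectively $\ge 2\ell$), hence $G$ is a $k$-core (respectively a $2\ell$-core), and Theorem~\ref{thm: k-core} gives $c_{cr}(G)\ge k$ (respectively $\ge 2\ell$). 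Combining these with the upper bounds yields equality.

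The computations here are routine, so there is no genuine obstacle; the only point requiring care is the degenerate case in which some $T_{n_i}$ is a single vertex, which contributes nothing to the product or to the minimum degree and throws off the count. Thus the statement should be read with each $T_{n_i}$ having at least one edge (each cycle automatically has minimum degree $2$), and under that reading the argument above is complete.
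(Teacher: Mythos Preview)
Your proof is correct and follows essentially the same approach as the paper: the upper bound comes from iterating Theorem~\ref{thm:cartesianproduct} together with $c_{cr}(T)=1$ and $c_{cr}(C)=2$, and the lower bound comes from the minimum-degree/$k$-core argument via Theorem~\ref{thm: k-core}. You are slightly more explicit than the paper in spelling out the induction and in flagging the degenerate single-vertex tree case, but the substance is identical.
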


\begin{proof} First note that $G=\square_{i=1}^kT_{n_i}$ has minimum degree $k$ so at least $k$ cops are required. Now, on a
tree one cop suffices to capture the robot therefore, by Theorem \ref{thm:cartesianproduct}, $k$ cops suffice on $\square_{i=1}^kT_{n_i}$.

Similarly, at least $2\ell$ cops are required since this is the minimum degree. Also, $c_{cr}(C_{m_{i}})=2$  for any cycle $C$, thus $2\ell$ cops are suffice 
by Theorem \ref{thm:cartesianproduct}.
\end{proof}

Note that the first result of Corollary \ref{cor:cartesiangrid} covers hypercubes when the trees are each $P_{2}$ and  $k$-dimensional grids when the trees are paths in general. 

\section{Strong Products of Paths}
\begin{definition} The strong  product of graphs $G$ and $H$, written $G\boxtimes H$, has $V(G\boxtimes H)=\{(g,h): g\in V(G), h\in V(H)\}$
and $(g,h)$ is adjacent to $(g',h')$ if  $g$ is equal or adjacent to $g'$ and $h$ is equal or adjacent to $h'$.
\end{definition}

As with the Cartesian product, this definition is easily extended to the product of more than two graphs. In general, a vertex of the product is $(a_1,a_2,\ldots, a_n)$ where
$a_i$ is in the $i^\text{th}$-coordinate.
If $G$ is the strong product of paths, a \emph{corner} is a vertex where for each $i$, the $i^\text{th}$-coordinate is a leaf in $P_{i}$.
For example, there are $4$ corners in a grid, $P_m\boxtimes P_n$, where $P_{i}$ is a path with $i$ vertices. 

\begin{thm}\label{thm: strong grid base case}
Let $G$ be a strong grid, $G = P_{m} \boxtimes P_{n}$. Then
\[c_{cr}(G) =  \begin{cases}
                      4, \text{ if $m,n>3$;}\\ 
                      3, \text{ if $\min\{m,n\} = 2$ or $3$.}\\
				
\end{cases}
\]  
\end{thm}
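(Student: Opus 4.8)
The plan is to get both lower bounds from Theorem~\ref{thm: k-core} and both upper bounds from explicit cop strategies modelled on the proof of Theorem~\ref{thm:cartesianproduct} and on Lemma~\ref{lem:retract}. For the lower bounds: when $m,n>3$, delete the four corner vertices of $G=P_m\boxtimes P_n$. Each vertex that was adjacent to a corner along a boundary edge, such as $(a_1,b_2)$ or $(a_2,b_1)$, loses exactly one neighbour and so has degree $4$ in the induced subgraph $H$ obtained; every other vertex of $H$ keeps degree at least $4$ (the remaining boundary cells have degree $5$, the cells adjacent to a corner diagonally have degree $7$, other interior cells degree $8$). Hence $\delta_H=4$, $H$ is connected and nonempty, so $H$ is a $4$-core and $c_{cr}(G)\ge 4$. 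When $\min\{m,n\}\in\{2,3\}$, a corner of $G$ already has degree $3$, so $\delta_G=3$, $G$ itself is a $3$-core, and $c_{cr}(G)\ge 3$. It then remains to prove $c_{cr}(G)\le 4$ in the first case and $c_{cr}(G)\le 3$ in the second.

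For the upper bound $c_{cr}(G)\le 4$ when $m,n>3$, I would split the four cops into two teams of two: a \emph{column team} responsible for the first coordinate (the robot's column) and a \emph{row team} responsible for the second, acting on the retracts $\{a_i\}\times P_n$ and $P_m\times\{b_j\}$ respectively. Mirroring Phases~1--3 of Theorem~\ref{thm:cartesianproduct}, each team drives its coordinate of the robot monotonically toward a leaf of the corresponding path and then executes the surrounding strategy there; whenever the robot moves so as to change the coordinate \emph{not} currently being surrounded, the other team makes progress toward surrounding or toward a leaf, so no move of the robot undoes both teams' progress. The robot is thereby herded into a corner $(a_1,b_1)$, a vertex of degree $3$; once its three neighbours are occupied by the column and row teams, one cop steps onto it and captures it on the following turn.

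The main obstacle, and the feature genuinely new relative to the Cartesian case, is that in the strong product a pair of cops cannot simply occupy a vertex (or an edge) of a column to stop the robot crossing it: when the robot steps from one column to an adjacent one it has three target cells available and can slip past diagonally, so a static ``barrier'' in a column really requires three cells to be blocked. Two cops alone therefore cannot hold a column. I would overcome this by keeping the column team \emph{flanking} the robot --- one cop on each side of the robot's current row within the barrier column --- and by exploiting the grid boundary: as soon as the row team has pushed the robot's row to a leaf of $P_n$, the number of cells through which the robot could re-enter the barrier column drops to two, and the two flanking cops together with the boundary form a complete barrier that can also be advanced. Verifying that the two teams can be interleaved so that both coordinates are simultaneously herded toward corners --- keeping track of the positions of the two teams relative to the robot and of which team moves on a given turn --- is the technical heart of the argument and is where the case analysis lives.

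For $\min\{m,n\}\in\{2,3\}$, say $n\le 3$, one fewer cop suffices precisely because the short coordinate takes at most three values: two cops form a barrier in the robot's column --- a \emph{complete} barrier when $n=2$, and when $n=3$ a two-cell barrier that is completed by the third cop pinning the robot's row to a leaf of $P_n$ --- while that third cop also drives the robot's column toward a leaf of $P_m$. Once the robot is confined to a single column, which is a copy of $P_n$ with $n\le 3$, the three cops finish as on $K_3$ or on $P_2\boxtimes P_3$. The obstacle is again the diagonal slip, now handled with three cops by having the barrier simultaneously hold and advance; the boundedness of $n$ is exactly what makes this possible, so that $c_{cr}(G)\le 3$ and, with the core bound, $c_{cr}(G)=3$.
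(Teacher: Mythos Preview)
Your lower bounds are exactly those of the paper: the $4$-core obtained by deleting the four corners when $m,n>3$, and the observation $\delta_G=3$ when $\min\{m,n\}\in\{2,3\}$.

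Your upper-bound plan, however, diverges from the paper and contains a genuine gap. You propose to split the four cops into two teams of two and imitate the Cartesian argument of Theorem~\ref{thm:cartesianproduct}. You correctly identify the obstruction---a robot crossing from one column of $P_m\boxtimes P_n$ to the next has three target cells, so two cops cannot hold a column---but your fix is circular. You say the column team's two-cop barrier becomes complete ``as soon as the row team has pushed the robot's row to a leaf of $P_n$''; yet the row team faces the identical three-cell problem and, by symmetry, needs the column team to have already pushed the robot to a leaf of $P_m$. Neither team can make irreversible progress until the other has finished. Concretely, even if both teams succeed in surrounding the robot's two shadows, the four cops sit at $(a,b\pm 1)$ and $(a\pm 1,b)$ while the robot at $(a,b)$ walks out along a diagonal to $(a\pm 1,b\pm 1)$. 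The Cartesian proof works because a single robot move changes at most one coordinate, so one team always gains; in the strong product a diagonal move changes both, and the sentence ``verifying that the two teams can be interleaved \ldots\ is the technical heart'' is exactly where the argument is missing. The same circularity recurs in your $n=3$ sketch, where the lone third cop is asked both to complete the barrier and to drive the other coordinate.

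The paper sidesteps this by keeping all four cops together as a \emph{$4$-line} $(i,\langle j\rangle)=\{(i,j-1),(i,j),(i,j+1),(i,j+2)\}$ in a single row. Four consecutive cells are wide enough that the robot's shadow on that row can be captured on an \emph{interior} vertex of the line and then held there against diagonal moves; the line is advanced one row at a time, with a short case analysis once the robot is one row above. For $n\in\{2,3\}$ the paper likewise keeps all three cops in one column and sweeps. In every case the cops act as a single block rather than as two independent teams, and it is this that eliminates the diagonal-escape problem.
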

\begin{proof}
Suppose  $G=P_2\boxtimes P_n$. The minimum degree of $G$ is 3, so at least 3 cops are required. The following algorithm shows that 3 cops suffice.

Place two cops on $(1,1)$ and one on $(2,1)$, the robot is on some vertex $(i,j)$ with $j>1$. Move the first two cops to $(1,2)$ and $(2,2)$, the third stays on $(2,1)$. The robot cannot move to $(1,1)$. The cop on $(2,1)$ now moves to $(2,2)$ and after the robot moves he is on some vertex $(i',j')$ with $j'>2$. The cops repeat until the robot is caught  on $(1,n)$ or $(2,n)$.

Suppose  $G=P_3\boxtimes P_n$, $n\geq 3$. The minimum degree of $G$ is 3, so at least 3 cops are required. The following algorithm shows that 3 cops suffice.

 In general, we assume that cops are on $(1,i)$, $(2,i)$ and $(3,i)$, starting with $i=1$. The robot is on some $(k,j)$ with $j>1$.
 
 On the cops move, if the robot is on $(k,j)$, $j>i+1$ then the cops increase their second coordinate by 1. 
 
  If the robot is on $(1,i+1)$ then the cop on $(3,i)$ moves to $(2,i+1)$. There are two cases. One, if the robot passes then the 
 cop on $(2,i)$ moves to $(1,i+1)$ and now he has to move or be captured. He has to move to $(k,i+2)$, for $k=1$ or $2$, and the other two cops now move to 
 $(2,i+1)$ and $(3,i+1)$.  Two, if the robot moves then
 he must move to $(k,i+2)$, for some $k$ or else be captured.  The other two cops move to $(1,i+1)$ and $(3,i+1)$. In both cases, the cops have increased their second coordinate and the robot is on smaller subgraph.

   If the robot is on $(2,i+1)$ then the cop on $(3,i)$ moves to $(2,i+1)$. There are three cases. One, if the robot moves to $(k,i+2)$ then cops move to occupy $(1,i+1)$, $(2,i+1)$ and $(3,i+1)$. Two, the robot moves  to $(1,i+1)$ and this situation is covered in the previous paragraph.  Three, the robot moves to $(3,i+1)$ whence the other two cops move to $(2,i)$ and $(3,i)$ giving the symmetric case to case 2.  In all cases, the end result is that the cops have increased their second coordinate and the robot is on smaller subgraph. 
   
   Eventually, the robot is captured  in the next sequence after the cops occupy $(1,n-1)$, $(2,n-1)$ and $(3,n-1)$.

Now suppose that $m,n>3$. There exists an induced $4$-core in $G$ (from deleting the four corners), hence four cops are necessary. 

We must show that $4$ cops is also sufficient. To do so, we present an algorithm for the cops to capture the robot. Again, we will take the alternate move interpretation.  In this algorithm, `capture of the shadow' by the cops means that after the cops' move, the robot and some cop are in the same column. The robot still has a move to try and evade the cops.

Let the strong grid be of size $m \times n$. The vertices, indexed by the rows and columns, are $(i,j)$, $1\leq i\leq m$, $1\leq j \leq n$. Increasing the first coordinate corresponds to \textit{moving up} a row. Similarly, increasing the second coordinate corresponds to \textit{moving right}. 
A \textit{4-line} of cops, $(i, \langle j\rangle)$,  has the cops on $(i, j-1)$, $(i, j)$, $(i, j+1)$, and $(i,j+2)$. With this notation, if  $s>n$ then $s$ is taken as $n$
and if $s<1$ then $s$ is taken as 1.
We refer to $(i, j)$ and $(i, j+1)$ as the \textit{interior} vertices of the $4$-line.

Placement: Place the cops on the vertices of $(1,\langle 2\rangle)$.  The robot places himself  on $(i,j)$.

\begin{enumerate}
\item[Step 1:] The cops move the $4$-line along the first row as a group until the shadow of the robot is either (i) captured on an interior vertex of the $4$-line or (ii) the robot is either on $(i, n)$ or $(i, 1)$ and the $4$-line is at $(1, \langle n-2\rangle)$ or $(1,\langle 2\rangle)$, respectively. After this shadow  capture, move to Step 2. 
\item[Step 2:] The robot moves but, regardless of the move,  its shadow is still caught. If the robot is at least two rows above the cops, move the $4$-line up one row to keep the shadow on an interior vertex. 
 If the $4$-line is on a row end, then he moves up. 
Repeat Step 2 until the cops are one row away from the robot. 
\item[Step 3:] It is now the robot's move, where the $4$-line is at $(i,\langle j\rangle)$ and the robot is at $(i+1, j)$ or $(i+1, j+1)$. 
There are a two cases to consider:  (i) If the robot moves up one row then the $4$-line follows keeping the shadow on an interior vertex.  (ii) If the robot does not move up then he moves to (without loss of generality) to either $(i+1,j)$ or $(i+1,j-1)$. In the former,  only the cop on $(i, j+2)$ moves and moves to $(i+1, j+1)$. In the latter, the cop on $(i, j+2)$ moves  $(i+1, j+1)$ and the others move one column to the left. In both cases the situation is now, the robot is on $(i+1,k)$ and
the cops are on $(i+1,k+1)$ and $(i,k-1)$, $(i,k)$, and $(i,k+1)$. If the robot does not move up a row he can only pass or move to $(i,k-1)$. If he 
passes, the cop on $(i+1,k+1)$ moves to $(i+1,k)$, the others pass. To avoid capture, now the robot must move up or left on the same row. If the robot moves up, in all cases the cops can move the four cops to form a $4$-line on the $i+1$ row with the robot's shadow captured by an interior vertex. If the robot moves left then all the cops also move left and the situation is repeated. Eventually, either the robot is on $(i,1)$ and must move up or is on $(m,1)$ and is captured when the cop on $(m,2)$ moves to $(m,1)$.
\end{enumerate}
Thus four cops is sufficient for capture on a strong grid.
\end{proof}

Next, we generalize Theorem~\ref{thm: strong grid base case} to higher dimensions. In a $k$-dimensional strong grid, $G=\boxtimes_{i=1}^kP_{n_i}$,  for brevity, a vertex can represented by $(i_1, i_2, \dots, i_k)$, where $1\leq i_j\leq n_j$. Implicitly, we are setting  $V(P_{n_i})=\{1,2,\ldots,n_i\}$. The paths are  disjoint and the context will indicate to which path a vertex belongs.

\begin{thm}\label{thm: k-dim strong grids}
Let $G$ be a $k$-dimensional strong grid, $G = \boxtimes_{i=1}^{k} P_{n_{i}}$. Then
\begin{equation*}
3\cdot2^{k-1} -2 \leq c_{cr}(\boxtimes_{i=1}^{k} P_{n_{i}}) \leq 3^{k}.
\end{equation*}
\end{thm}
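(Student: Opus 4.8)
The statement bundles a lower bound of $3\cdot 2^{k-1}-2$ with an upper bound of $3^k$, and I would establish the two separately. Throughout I assume, as Theorem~\ref{thm: strong grid base case} tacitly does, that each $n_i$ is large (it suffices that every $n_i\geq 4$); for very short paths the counts below degenerate, exactly as the planar case of Theorem~\ref{thm: strong grid base case} splits into two regimes.

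\emph{Lower bound.} The plan is to exhibit a large core and then invoke Theorem~\ref{thm: k-core}. Call a vertex of $G$ a \emph{corner} if every coordinate is a leaf of the corresponding path, and a \emph{near-corner} if exactly one coordinate is a neighbour of a leaf while the other $k-1$ are leaves. A short computation of closed neighbourhoods in $\boxtimes_{i=1}^k P_{n_i}$ gives: a corner has degree $2^k-1$; a near-corner has degree $3\cdot 2^{k-1}-1$ and is adjacent to exactly one corner (this is where $n_i\geq 4$ enters, so that no vertex can be adjacent to two corners); and every other vertex has at least one interior coordinate, hence degree at least $3\cdot 2^{k-1}-1$. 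Since $2^k-1<3\cdot 2^{k-1}-2$ for $k\geq 2$, the $k$-core algorithm may delete all $2^k$ corners; as each surviving vertex then loses at most one neighbour, the resulting induced subgraph has minimum degree $3\cdot 2^{k-1}-2$ (attained precisely at the near-corners) and is therefore a $(3\cdot 2^{k-1}-2)$-core. Theorem~\ref{thm: k-core} now gives $c_{cr}(G)\geq 3\cdot 2^{k-1}-2$, which matches the exact value when $k=1$ (Theorem~\ref{thm: trees}) and, for $n_i\geq 4$, when $k=2$ (Theorem~\ref{thm: strong grid base case}).

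\emph{Upper bound.} I would induct on $k$, peeling off one coordinate per step at the price of a factor $3$, in the spirit of the phased argument of Theorem~\ref{thm:cartesianproduct} and the $4$-line sweep of Theorem~\ref{thm: strong grid base case}. The base case $k=1$ is a path, on which a contiguous block of three cops placed behind the robot sweeps him to an end, so $c_{cr}(P_{n_1})\leq 3=3^1$; using three cops rather than one is what lets the induction lift the strategy. For the step write $G=G'\boxtimes P_{n_k}$ with $G'=\boxtimes_{i=1}^{k-1}P_{n_i}$, and recall that the projection forgetting the last coordinate realises each layer $G'\times\{t\}$ as a retract of $G$, along which the robot casts a shadow that walks on $G'$ (Lemma~\ref{lem:retract}). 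Deploy three teams of $3^{k-1}$ cops, each running the inductively supplied $G'$-strategy, one team in each of three consecutive layers of the last coordinate: these form a ``slab'' of thickness $3$. Each team keeps the robot's $G'$-shadow captured inside its own layer; the slab is three layers deep precisely so that the robot, even using the diagonal moves of the strong product together with his privilege of moving after the cops, can neither thread through it nor circle around it in the last coordinate. The cops then advance the slab along the last coordinate, forcing the robot's $k$-th coordinate toward a leaf; once the slab is pinned against the far end the robot is confined to a single layer (a copy of $G'$) and the three embedded $G'$-strategies finish the capture. In total this uses $3\cdot 3^{k-1}=3^k$ cops.

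The routine work is the lower-bound count and the reduction bookkeeping. The crux — the step I expect to fight — is proving the slab is genuinely impenetrable: one must check move by move that advancing the slab one layer is compatible with all three teams maintaining shadow-capture, and then run the boundary endgame in which the robot is squeezed into a corner, the higher-dimensional analogue of Step~3 in the proof of Theorem~\ref{thm: strong grid base case} with its delicate ``the cop on $(i,j+2)$ moves while the others shift one column'' maneuvers. It is this robustness demand — three full copies of the $(k-1)$-dimensional strategy at every stage, rather than something leaner — that inflates the count to $3^k$ and leaves the gap down to $3\cdot 2^{k-1}-2$; closing that gap would require a thriftier sweep, which is why only bounds are claimed.
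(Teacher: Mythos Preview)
Your lower bound is exactly the paper's: delete the $2^k$ corners, observe that the surviving near-corners have degree $3\cdot 2^{k-1}-2$, and invoke Theorem~\ref{thm: k-core}.

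Your upper bound, however, takes a different and harder route than the paper. The paper does not induct at all. It simply observes that a strong product of paths is cop-win in the classical sense, so a single \emph{central} cop can run the ordinary cop-win strategy. The remaining $3^k-1$ cops are placed on every vertex of the central cop's closed neighbourhood and, on each turn, all $3^k$ cops translate by the same vector as the central cop (a legal move in the strong product). The cube therefore stays rigid, and the moment the central cop lands on the robot's vertex the robot's entire closed neighbourhood is occupied, so he is captured. To avoid boundary degeneracies the paper first embeds $G$ as a retract of a grid padded by one vertex on every side, invoking Theorem~\ref{thm:retract} to let the cops play on the larger graph.

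This sidesteps precisely the part you flag as the crux: there is no slab to prove impenetrable, no synchronisation of three separate teams, and no higher-dimensional analogue of the Step~3 endgame. Your inductive scheme may well be salvageable --- indeed, if each of your three teams is itself a rigid $3^{k-1}$-cube tracking the shadow, then the three layers together \emph{are} the paper's $3^k$-cube, and your ``advance the slab'' is just the last coordinate of the cop-win move --- but as written the induction hypothesis is underspecified (you need not merely $c_{cr}(G')\le 3^{k-1}$ but a particular surrounding configuration that can be maintained while the slab shifts), and the impenetrability argument you anticipate fighting is real work that the paper's approach makes unnecessary.
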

\begin{proof}
The vertices of minimum degree in $G$ are the `corners', i.e. the vertices where all the coordinates are all leaves in the original paths. Their degrees are $2^k-1$. Let $L$ be the vertices of $G$ where all  $k$ coordinates are leaves in the original paths and let $H$ be the induced graph on $V(G)\setminus L$.
 The vertices of minimum degree in $H$ are those with $k-1$ coordinates being leaves in the original paths and the other coordinate being adjacent to leaves. Their degrees in $G$ are $3\cdot2^{k-1} - 1$ and, in addition, the adjacent corner has been removed leaving the degree as $3\cdot2^{k-1} -2$ in $H$.
 
 To capture the robot we give an algorithm based on the cop-win strategy. 
  To avoid special cases when the cops are close to the edges or corners of this strong hypergrid, let 
 $H=\boxtimes_{i=1}^kP'_{n_i}$ where $V(P'_{n_i})= \{0,1,2,\ldots,n_i, n_{i}+1\}$, that is $P_{n_i}$ extended by a new leaf at each end. Note that $P_{n_i}$ is a retract of $P'_{n_i}$ via $f_i(j) = j$ if $1\leq j\leq n_i$, $f_i(0)=1$ and $f_i(n_i+1)=n_i$. In turn, $G$ is a retract of $H$ via $f(v_1,v_2,\ldots,v_k) = (f_1(v_1),f_2(v_2),\ldots,f_k(v_k))$. We now use Theorem \ref{thm:retract} and restrict the robot to $G$ but let the cops play on $H$.

Again, we consider the alternating model with the appropriate restrictions and winning condition. Place the cops on the vertices $(x_1,x_2,\ldots,x_k)$, $1\leq x_i\leq 3$ for each $i$. The cop on $(2,2,\ldots,2)$ is the \textit{central} cop and the others occupy every vertex of its neighbourhood.
The central cop plays the cop-win strategy on $H$. When the central cop moves from $(y_1,y_2,\ldots,y_k)$ to $(y'_1,y'_2,\ldots,y'_k)$  each coordinate changes by at most 1. The non-central cops now move to change their coordinates by 
$(y'_1-y_1,y'_2-y_2,\ldots,y'_k-y_k)$ and they still occupy every vertex of the central cop's neighbourhood. Thus, when the central cop is on the same vertex as the robot, the robot has nowhere safe to move.
\end{proof}

In the proof of the upper bound in Theorem \ref{thm: k-dim strong grids}, note that the robot will have been captured before the central cop captures via the cop-win strategy. This indicates that the upper bound is probably not tight. Indeed, in the 2-dimensional case, the lower bound is correct.
\section{Outerplanar}

An \textit{outerplanar}  graph can be drawn so that the vertices can be arranged in a circle and all edges can be placed on or outside of the circle with no crossing edges.  
We will call edges not lying directly on the circle, \textit{chords}. Let $V(G)=\{v_0,v_1,\ldots,v_n\}$ where the vertices are placed in that order around the circle. The \textit{length} of a chord $v_iv_j$, $i<j$,  is $\min\{|j-i|,|n+1+i-j|\}$, i.e., the shortest arc along the circle. 

Clarke \cite{Clarke} showed that two cops were sufficient to capture a robber. We show that two are sufficient to capture a robot.

\begin{thm}\label{thm:outerplanar}
Let $G$ be a connected  outerplanar graph. If $G$ is a tree then   $c_{cr}(G) = 1$, otherwise $c_{cr}(G) = 2$. 
\end{thm}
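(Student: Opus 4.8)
The lower bound is immediate. If $G$ is a tree then $c_{cr}(G)=1$ by Theorem~\ref{thm: trees}; if $G$ is not a tree it contains a cycle, which is a $2$-core, so $c_{cr}(G)\ge 2$ by Theorem~\ref{thm: k-core}. So the whole task is to exhibit a winning strategy for two cops when $G$ is a connected outerplanar graph that is not a tree. I would first reduce to the $2$-connected case: a single cop sitting permanently on a cut vertex confines the robot to one side, so one can peel off blocks of $G$ one at a time (the robot analogue of the usual block-by-block reduction for the cop number; the only timing issue concerns the stationary cop, who never moves, so it is harmless), eventually confining the robot to a single block $B$ with one cop pinned on the cut vertex through which the robot entered $B$ and one cop free. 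Each such $B$ is either a $K_2$ (trivial) or a $2$-connected outerplanar graph, so it suffices to handle a $2$-connected outerplanar $G$ with outer cycle $v_0v_1\cdots v_n$ and a set of pairwise non-crossing chords; its weak dual $T$ (inner faces as nodes, adjacency $=$ sharing a chord) is a tree.

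The core is an interval-shrinking strategy phrased through retracts. I would maintain the invariant that the two cops occupy the two endpoints of an edge or chord $e=uv$ of $G$ and that the robot is confined to the sub-polygon $Q_e$ that $e$ cuts off on the robot's side. Crucially $Q_e$, together with the edge $e$, is a retract of $G$: one folds the complementary outer arc onto a $u$–$v$ path inside $Q_e$, exactly as in the path-product example following Theorem~\ref{thm:retract}, so by Theorem~\ref{thm:retract} and Lemma~\ref{lem:retract} it is legitimate to analyse the play as if it occurred on $Q_e$. To shrink, look at the face $F$ of $Q_e$ incident to $e$; each of its other boundary edges is an edge or chord of $G$ and corresponds to one step of $T$ toward the robot. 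The cops then reposition onto the endpoints of the boundary edge $e'$ of $F$ that separates the robot from $e$, yielding a strictly smaller sub-polygon $Q_{e'}\subsetneq Q_e$ and re-establishing the invariant — in effect, the one-cop tree strategy of Theorem~\ref{thm: trees} is being run on $T$, with the ``cop token on $T$'' realised by a cop pair standing on a chord of $G$. When the descent can go no further the robot's region is a face incident to exactly one chord, i.e. a chordless outer arc with the two cops at its ends, and the cops march toward each other along that path and capture him within $|Q_e|$ further moves; the base cases consumed along the way are $c_{cr}(K_3)=c_{cr}(C_n)=2$ (Corollary~\ref{cor: cycles}) and $c_{cr}(\text{path})=1$ (Theorem~\ref{thm: trees}).

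The main obstacle is the repositioning moves. While a cop walks from her current vertex to a new chord-endpoint, the separating pair is momentarily incomplete, and the insightful robot will try to slip through precisely then. The point to pin down is that each such walk can be routed strictly inside the thin region between $e$ and $e'$, a region bounded by the single face $F$, so that at every intermediate configuration the robot is still separated from the walking cop's destination either by the endpoint still held fixed or by the boundary of $F$; the non-crossing property of the chords is what guarantees that no chord lets the robot ``jump over'' a cop in transit. Making this rigorous — deciding which endpoint to hold fixed, in what order the two cops move, and verifying the capture conditions under the move-second rule at each intermediate position — is the real content of the proof. Everything else is the bookkeeping of the descent in $T$, the verification that each $Q_e$ (and the extended polygons used to avoid boundary effects) is genuinely a retract so that Lemma~\ref{lem:retract} applies, and the finishing pincer on a chordless arc.

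I expect the transition analysis to be delicate but not deep: because $G$ is outerplanar the region swept by a transition is always a subpath of the outer cycle lying on one face, the robot can never be on that subpath during a transition, and so the worst the robot can do is retreat into the shrinking sub-polygon, which is exactly what we want. If the direct transition argument proves awkward, a fallback is to have one cop continuously ``shadow'' the robot along a carefully chosen retract path (an isometric $u$–$v$ path), using Lemma~\ref{lem:retract} to keep that cop between the robot and the rest of the graph, while the second cop performs the descent unmolested; this trades the transition difficulty for the task of showing the chosen path is a retract, which outerplanarity again makes possible.
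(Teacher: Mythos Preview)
Your plan differs substantially from the paper's argument. The paper runs a straight induction on $|V(G)|$. After disposing of any cut vertex that has a tree hanging off it (retract the tree to the cut vertex, capture the shadow on the smaller graph, then finish inside the tree with one cop while the other guards), it may assume $\delta(G)\ge 2$. If $G$ is a cycle, two cops suffice; otherwise it selects a chord $v_iv_j$ of \emph{shortest} length, deletes the short outer arc $v_{i+1},\dots,v_{j-1}$, observes that the result $R$ is a smaller connected outerplanar retract of $G$, and invokes the inductive hypothesis to capture the robot's shadow on $R$. If the robot is then physically on the deleted arc, the two cops are already at (or one step from) $v_i$ and $v_j$, and they finish on that chordless path. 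No transition analysis is ever needed: induction supplies a black-box two-cop strategy on $R$, and the shortest-chord choice guarantees the leftover piece is just a bare path.

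Your weak-dual descent is a natural alternative picture, but the obstacle you flag is exactly where it is currently incomplete. A face $F$ may have many boundary edges, and the robot can sit on a vertex of $\partial F$ itself rather than strictly beyond a single sub-chord $e'$; so the assertion ``the robot can never be on that subpath during a transition'' is the thing to be proved, not assumed. During a multi-step walk from $e$ to $e'$ the robot can enter $F$, slide along $\partial F$, and duck into a different sub-polygon, forcing a mid-transition change of target, and controlling this carefully ends up reproducing the paper's ``shrink by one retract at a time'' mechanism rather than bypassing it. Your block reduction has a parallel issue: as stated it leaves one cop pinned on the cut vertex and only one free, yet the $2$-connected block needs both; releasing the pinned cop without letting the robot oscillate between sides again requires the retract $G\to B$ together with an induction on $|V(G)|$, not a direct ``peel one block at a time'' sweep. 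Both gaps are repairable, and in each case the repair is precisely the retract-plus-induction device the paper uses from the outset.
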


\begin{proof} Theorem \ref{thm: trees} covers the case when $G$ is a tree. Thus, we may assume that $G$ is not a tree and therefore,
again by Theorem \ref{thm: trees}, we know $c_{cr}(G)\geq 2$. It remains to show that two cops suffice.

We proceed by induction on $|V(G)|$. If $G$ has one vertex then one cop suffices. We now assume that for some $n\geq 1$, 2 cops suffice to 
capture the robot on any connected outerplanar graph with $k\leq n$ vertices. Let $G$ be a connected outerplanar graph on $n+1$ vertices. Suppose $G$ contains a cut-vertex $x$ and $G\setminus\{x\}$ has a connected component $X'$ such that $X'\cup\{x\}$ is a tree. Let $f:G\rightarrow G\setminus X'$ by $f(y) = y$ if $y\in G\setminus X$ and $f(y)=x$ otherwise. Now $f$ is a retraction map and
thus  $G\setminus X$ is a retract of $G$. Moreover, $G\setminus X$ is connected and outerplanar. By induction, 2 cops suffice to capture the shadow on $G\setminus X$. If the shadow is not on $x$ then the robot has been captured. If the shadow is on $x$ the robot is not captured then he is on $X'$. Now one cop remains on $x$, which prevents the robot moving off $X'$, and by Theorem \ref{thm: trees}, the other cop can capture the robot.

We may suppose that all vertices are of degree 2 or greater.  If $G$ is a cycle then two cops suffice. If $G$ is not a cycle then
there is a chord and let $v_iv_j$, $i<j$,  be a chord of shortest length. Note $j\geq i+2$. Without loss of generality, we may assume that there is no chord $v_rv_s$ with $i\leq r<s\leq j$ unless $i=r$ and $s=j$. Let $R=G\setminus \{v_{i+1},v_{i+2},\ldots, v_{j-1}\}$. Set $f:G\rightarrow R$ by
$f(x) = x$, if $x\in R$, and $f(x) = v_i$ otherwise. Now $f$ is a retraction map and $R$ is a retract. The graph $R$ is a connected, outerplanar graph and by induction, 2 cops can capture the robot's shadow on $R$. 
Once captured, either the robot is captured or 
the shadow is on $v_i$ and the robot is on one of $v_{i+1},v_{i+2},\ldots, v_{j-1}$. The game is over in the former situation so we suppose the latter. Note that if the robot had been on $v_i$ then he would have moved to $v_{i+1}$. Since the shadow has been captured in $R$ on $v_i$ then $v_i$ only has two neighbours in $R$, those being $v_{i-1}$ and $v_j$.  Hence on the cops move which captured the shadow, one cop was on either $v_{j+1}$ or $v_j$ and the other on $v_{i-1}$ or $v_i$. One capturing move is to move the cop on $v_{i-1}$ to $v_i$, or remain on $v_i$,  and the other to move to or remain on $v_j$. The robot is now trapped on the path $v_{i+1},v_{i+2},\ldots, v_{j-1}$. The cops can capture by having the one on $v_i$ move successively along this path. \end{proof}

\section{Open Problems} 

Given the similarities between the surrounding model \cite{BurgessCCDFJP} and the Cheating Robot, we observe that for any graph $G$ we have $c_{cr}(G) \leq s(G)$, where $s(G)$ is the surrounding cop number. For paths with $4$ or more vertices, outerplanar graphs and complete graphs, we know that $0\leq s(G)-c_{cr}(G) \leq 1$.  \emph{Is it true that $s(G)$ and $c_{cr}(G)$ differ by at most 1?}

An important Cops and Robbers result is that three cops suffice to capture a robber on a planar graph \cite{AignerF}. For the surrounding model the upper bound is 7 \cite{BradshawH}. The icosahedron is regular of degree 5, hence there is at least one planar graph that requires at least $5$ cops for capture. We ask: \textit{Is it true that for all planar graphs $G$, $c_{cr}(G)\leq 5$?}

Let $H$ be a connected subgraph of $G$. For Cops and Robbers, there are examples where $c(H)>c(G)$. For the Cheating Robot, \textit{is it always true that 
$c_{cr}(H)\leq c_{cr}(G)$?} Theorem \ref{thm:retract} shows the inequality is true when $H$ is a retract.

\section*{Acknowledgements}

\noindent
Melissa A. Huggan was supported by the Natural Sciences and Engineering Research Council of Canada (funding reference number PDF-$532564-2019$). 
Richard J. Nowakowski was supported by the Natural Sciences and Engineering Research Council of Canada (funding reference number 2019-04914).


\end{document}